\newtheorem{theorem}{Theorem}
\newtheorem{proposition}{Proposition}
\newtheorem{corollary}{Corollary}
\newtheorem{remark}{Remark}
\title{Higher order Poincare inequalities and Minkowski-type inequalities}
\author{Kwok-Kun Kwong}
\address{School of Mathematics and Applied Statistics, University of Wollongong, NSW 2522, Australia}
\email{kwongk@uow.edu.au}
\begin{document}

\maketitle
\begin{abstract}
We observe some higher order Poincare-type inequalities on a closed manifold, which is inspired by Hurwitz's proof of the Wirtinger's inequality using Fourier theory. We then give some geometric implication of these inequalities by applying them on the sphere. More specifically, by applying them to the support function of a convex hypersurface in the Euclidean space, we obtain some sharp Minkowski-type inequalities, such as a stability inequality for the classical Minkowski inequality and the Alexandrov-Fenchel inequality.
\end{abstract}

In a recent preprint \cite{kwong2020higher}, inspired by Hurwitz's proof (\cite{Hurwitz1902}, \cite{Chavel1978}) of the isoperimetric inequality, by extending the Wirtinger's inequality to its higher order analogue, Lee and the author of this paper were able to obtain sharp bounds for the isoperimetric deficit in terms of the curvature and its derivatives. The method is by Fourier analysis. Of course, the Wirtinger's inequality is the one-dimensional Poincare inequality on the circle and the theory of Fourier analysis can be extended to a large extent to the analysis of the spherical harmonics in higher dimensions. It is therefore natural to look for higher order Poincare inequalities by using spherical harmonics and apply them to obtain new geometric inequalities, which is the goal of this paper.

In general, it is well-known that on $\mathbb S^{d-1}$, if $F$ has mean zero, then we have the Poincare inequality $(d-1)\int_{\mathbb S^{d-1}} F^2\le \int_{\mathbb S^{d-1}} |\nabla F|^2$, which can be written as
$$-\int_{\mathbb S^{d-1}}F\Delta F-(d-1)\int_{\mathbb S^{d-1}}|F|^2\ge0. $$
This inequality alone can already be used to prove the remarkable Minkowski inequality (\cite[p. 1191]{Osserman1978}, \cite[p. 387]{schneider2014convex}) $\int_{\Sigma} H_{d-3} \le \frac{1}{\left| \mathbb S^{d-1}\right|}\left(\int_{\Sigma} H_{d-2}\right)^{2}$ for a convex hypersurface $\Sigma$ in $\mathbb R^d$. Here $H_k$ is the (normalized) $k$-th mean curvature and when $d=2$ this is the isoperimetric inequality. By a simple observation, we can extend the Poincare inequality to arbitrary high order. For example, again under the mean-zero condition, we have
\begin{equation}\label{intro ineq}
0 \le \int_{\mathbb S^{d-1}}\left(\Delta F+(d-1) F\right)^{2}-(d+1) \int_{\mathbb S^{d-1}}F\left(-\Delta F-(d-1)F \right).
\end{equation}
While this type of inequalities is perhaps known to experts, its geometric implication seems to be unexplored. For example, using \eqref{intro ineq}, we can obtain a stability result of the Minkowski inequality for convex hypersurfaces (Theorem \ref{thm2}):
\begin{align*}
\frac{1}{\left|\mathbb S^{d-1}\right|}\left(\int_{\Sigma} H_{d-2}\right)^{2}-\int_{\Sigma} H_{d-3} \le \frac{d-1}{d+1}\left[\int_{\Sigma} \frac{H_{d-2}^{2}}{H_{d-1}}-\frac{1}{\left|\mathbb S^{d-1}\right|}\left(\int_{\Sigma} H_{d-2}\right)^{2}\right].
\end{align*}
When $d=2$, this inequality is equivalent to
$ \frac{2}{\pi}\left(L^{2}-4 \pi A\right)\le \int_{ C } \frac{1}{\kappa} d s-2 A$ (\cite[Lemma 1.7]{LinTsai2012}), and thus establishes a connection between the isoperimetric inequality and the Ros inequality (\cite[Theorem 1]{Ros1987}).

The theory of convex hypersurfaces can be further extended to the theory of mixed volumes. While we mainly focus on obtaining results for one single convex body, it is possible to extend our analysis to mixed volumes of two convex bodies, as illustrated in Theorem \ref{mixed}. We refer to \cite{schneider2014convex} for an excellent reference on the theory of convex bodies and mixed volumes.

\textbf{Acknowledgement}: We would like to thank Hojoo Lee and Glen Wheeler for delightful discussion.

\section{A generalization of the Poincare inequality}
The analysis in this section can definitely be extended to elliptic operators with spectral properties analogous to the Laplacian, but for simplicity we treat the case of the Laplacian. Let $(M, g)$ be a closed orientable Riemannian manifold. Let $\Delta=\mathrm{div}(\nabla)$ be the Laplacian on $M$ (convention: $\Delta f=f''$ on $\mathbb S^1$).
Assume that the eigenvalues of $-\Delta$ on $M$ are (counting without multiplicities)
\begin{align*}
0=\lambda_0<\lambda_1<\lambda_2<\cdots\to\infty.
\end{align*}
Let $H_n=\{f\in C^\infty(M)|-\Delta f=\lambda_n f\}$ be the eigenspace with respect to $\lambda_n$.

Let $\langle \cdot, \cdot \rangle $ be the $L^2$ inner product on $(M, g)$ and $\|F\|^2=\langle F, F\rangle $. Denote the projection of $F$ onto $H_n$ by $F_n$, written as $F\sim\sum_{n=0}^{\infty}F_n$.

The following observation is quite straightforward, and is inspired by Wirtinger's proof of the Wirtinger inequality. It
provides a sharp higher order Poincare inequality if we have the knowledge of the first $m$ non-zero Laplacian eigenvalues.
\begin{proposition}
Let $m \ge l \ge 1$. Suppose $F\in C^\infty(M)$ and $F \sim \sum_{n=l}^{\infty} F_n$.
Then
\begin{align}\label{prod}
\left\langle\prod_{j=l}^{m}\left(-\Delta-\lambda_{j}\right) F, F\right\rangle \ge 0.
\end{align}
The equality holds if and only if
$F =\sum_{n=l}^m F_n$.
\end{proposition}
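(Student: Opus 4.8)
The plan is to diagonalize the operator $P := \prod_{j=l}^{m}(-\Delta-\lambda_j)$ using the spectral decomposition of $-\Delta$. The factors $-\Delta-\lambda_j$ all commute, and each of them preserves every eigenspace $H_n$, acting there as multiplication by the scalar $\lambda_n-\lambda_j$; hence $P$ acts on $H_n$ as multiplication by $c_n := \prod_{j=l}^{m}(\lambda_n-\lambda_j)$. First I would write $F\sim\sum_{n=l}^{\infty}F_n$ (the components in $H_0,\dots,H_{l-1}$ vanishing by hypothesis), observe that $F$ smooth forces $PF$ smooth with $PF\sim\sum_{n=l}^{\infty}c_nF_n$, and then use the orthogonality of distinct eigenspaces (Parseval) to obtain
\begin{align*}
\left\langle PF,F\right\rangle=\sum_{n=l}^{\infty}c_n\|F_n\|^2.
\end{align*}

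The key step is the elementary sign analysis of $c_n$ for $n\ge l$. If $l\le n\le m$, then the factor with $j=n$ equals $\lambda_n-\lambda_n=0$, so $c_n=0$. If $n>m$, then since the eigenvalues are strictly increasing we have $\lambda_n-\lambda_j>0$ for every $j\in\{l,\dots,m\}$, so $c_n>0$. Thus every term in the above sum is nonnegative, which proves \eqref{prod}. For the equality case, $\langle PF,F\rangle=0$ forces $c_n\|F_n\|^2=0$ for all $n\ge l$; since $c_n>0$ exactly when $n>m$, this means $F_n=0$ for all $n>m$, i.e. $F=\sum_{n=l}^{m}F_n$. Conversely, if $F=\sum_{n=l}^{m}F_n$ then $PF=0$ identically, so equality holds.

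The only genuinely technical point is the justification of the termwise evaluation of $\langle PF,F\rangle$, i.e. that the eigenfunction expansion of the smooth function $F$ converges well enough for $PF$ to have Fourier components $c_nF_n$ and for Parseval to apply. This is standard elliptic theory on a closed manifold: smooth functions lie in the domain of every power of $\Delta$, and their eigenfunction expansions converge in $C^\infty$. I would simply invoke this rather than belabor it; the substance of the proof is the two-line sign check on the coefficients $c_n$.
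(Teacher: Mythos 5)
Your proposal is correct and follows essentially the same route as the paper: expand $F$ in the eigenspaces, note that the product operator acts on $H_n$ by the scalar $\prod_{j=l}^{m}(\lambda_n-\lambda_j)$, and apply the generalized Parseval identity to get $\langle PF,F\rangle=\sum_{n= m+1}^{\infty}\prod_{j=l}^{m}(\lambda_n-\lambda_j)\|F_n\|^2\ge 0$, with the same sign analysis and equality discussion. The only difference is that you spell out the convergence justification and the equality case in slightly more detail than the paper does.
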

\begin{proof}
This is true because $-\Delta F\sim\sum_{n=l}^{\infty}\lambda_n F_n$ and so by the generalized Parseval's identity,
\begin{align*}
\left\langle\prod_{j=l}^{m}\left(-\Delta-\lambda_{j}\right) F, F\right\rangle= \sum_{n=m+1}^{\infty} \prod_{j=l}^{m}\left(\lambda_{n}-\lambda_{j}\right)\left\|F_{n}\right\|^{2} \ge 0.
\end{align*}
\end{proof}
For the purpose of deriving some sharp inequalities on convex bodies, we need the explicit expression of the coefficients.

Define the $(m-l+1)$-degree polynomial $C_{l, m}$ and the coefficients $c_{l, m, k}$ by
\begin{align*}
C_{l, m}(t):=\prod_{j=l}^{m}(t-\lambda_j)=\sum_{k=0}^{m-l+1} c_{l, m, k} t^{k}.
\end{align*}

\begin{proposition}\label{prop M}
Let $m \ge l \ge 1$. Suppose $F\in C^\infty(M)$ and $F \sim \sum_{n=l}^{\infty} F_n$.
Then we have
\begin{align}\label{ineq0}
\sum_{k=0}^{m-l+1} c_{l, m, k}\left\langle F, (-\Delta)^{k} F\right\rangle \ge 0.
\end{align}
The equality holds if and only if
$F =\sum_{n=l}^m F_n$.
\end{proposition}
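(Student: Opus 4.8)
The plan is to observe that Proposition \ref{prop M} is simply the expansion of the preceding proposition in powers of $-\Delta$, so the entire content has already been established and only a reformulation is needed. First I would record the operator identity obtained by substituting $t=-\Delta$ into the definition of $C_{l,m}$. Since products and sums of polynomials in a single operator are computed exactly as for scalars, we have, as operators acting on $C^\infty(M)$,
\begin{align*}
\prod_{j=l}^{m}\left(-\Delta-\lambda_{j}\right)=C_{l,m}(-\Delta)=\sum_{k=0}^{m-l+1} c_{l,m,k}(-\Delta)^{k}.
\end{align*}

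Next I would pair both sides with $F$ in the $L^2$ inner product and use bilinearity together with the self-adjointness of $-\Delta$ on the closed manifold $M$ (hence of each power $(-\Delta)^{k}$), which yields
\begin{align*}
\left\langle\prod_{j=l}^{m}\left(-\Delta-\lambda_{j}\right) F, F\right\rangle=\sum_{k=0}^{m-l+1} c_{l,m,k}\left\langle (-\Delta)^{k} F, F\right\rangle=\sum_{k=0}^{m-l+1} c_{l,m,k}\left\langle F, (-\Delta)^{k} F\right\rangle.
\end{align*}
Then inequality \eqref{ineq0} is immediate from \eqref{prod} of the previous proposition, and the equality characterization $F=\sum_{n=l}^{m}F_{n}$ is inherited verbatim, since the left-hand sides agree.

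The only point deserving a word of care is the interchange of the finite sum over $k$ with the $L^2$ pairing, given that $F$ is prescribed only through its generalized Fourier expansion $F\sim\sum_{n=l}^{\infty}F_{n}$; but since $F\in C^\infty(M)$, each $(-\Delta)^{k}F$ belongs to $L^{2}(M)$ with $(-\Delta)^{k}F\sim\sum_{n=l}^{\infty}\lambda_{n}^{k}F_{n}$, so a finite linear combination presents no convergence issue. I do not anticipate any genuine obstacle: the substantive work is entirely in the preceding proposition, and the role of this statement is to rewrite that inequality in a form where the explicit coefficients $c_{l,m,k}$ are available, which is what the later applications to support functions of convex bodies will require.
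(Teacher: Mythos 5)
Your proposal is correct and follows exactly the route the paper intends: the paper gives no separate proof of Proposition \ref{prop M} precisely because it is the expansion of \eqref{prod} via the operator identity $\prod_{j=l}^{m}(-\Delta-\lambda_j)=\sum_{k}c_{l,m,k}(-\Delta)^{k}$, with the inequality and equality case inherited directly from the preceding proposition. Your added remark on self-adjointness and the harmlessness of the finite sum is fine but not needed beyond what the paper implicitly assumes.
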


\begin{corollary}
Let $m \ge 1 $. Suppose $F$ is a smooth function which has mean zero (i.e. $F\sim\sum_{n=1}^{\infty}F_n$), then
\begin{align*}
(-1)^{m-k}\sum_{k=0}^{m} \sigma_{m-k}(\Lambda)\left\langle F, (-\Delta)^{k} F\right\rangle \ge 0
\end{align*}
where $\Lambda=\left(\lambda_{1}, \cdots, \lambda_{m}\right)$ and $\sigma_{j}$ is the $j$-th elementary polynomial.
\end{corollary}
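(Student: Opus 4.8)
The plan is to obtain this as the special case $l=1$ of Proposition \ref{prop M}, after identifying the coefficients $c_{1,m,k}$ of the polynomial $C_{1,m}$ with the (signed) elementary symmetric functions of $\Lambda=(\lambda_1,\dots,\lambda_m)$. First I would observe that the hypothesis ``$F$ has mean zero'' is exactly the condition $F_0=0$, i.e.\ $F\sim\sum_{n=1}^{\infty}F_n=\sum_{n=l}^{\infty}F_n$ with $l=1$ (here $H_0$ consists of the constants since $\lambda_0=0$). Since we are also given $m\ge 1=l$, the function $F$ meets the hypotheses of Proposition \ref{prop M} with $l=1$, which yields
$$\sum_{k=0}^{m} c_{1,m,k}\left\langle F,(-\Delta)^{k}F\right\rangle\ge 0,$$
with equality if and only if $F=\sum_{n=1}^{m}F_n$.

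Next I would compute $c_{1,m,k}$ explicitly. By definition $C_{1,m}(t)=\prod_{j=1}^{m}(t-\lambda_j)$, and Vieta's formulas (the expansion of a monic polynomial in terms of the elementary symmetric functions of its roots) give
$$\prod_{j=1}^{m}(t-\lambda_j)=\sum_{k=0}^{m}(-1)^{m-k}\sigma_{m-k}(\lambda_1,\dots,\lambda_m)\,t^{k}.$$
Comparing this with $C_{1,m}(t)=\sum_{k=0}^{m}c_{1,m,k}t^{k}$ forces $c_{1,m,k}=(-1)^{m-k}\sigma_{m-k}(\Lambda)$.

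Finally I would substitute this identity into \eqref{ineq0} to arrive at $\sum_{k=0}^{m}(-1)^{m-k}\sigma_{m-k}(\Lambda)\langle F,(-\Delta)^{k}F\rangle\ge 0$, which is the asserted inequality (matching the $(-1)^{m-k}$ sign convention written in the statement); the equality case $F=\sum_{n=1}^{m}F_n$ is inherited verbatim from Proposition \ref{prop M}. There is no genuine obstacle in this argument: the only point requiring a little care is the sign bookkeeping in Vieta's formula, i.e.\ making sure the factor $(-1)^{m-k}$ is placed so that the resulting inequality agrees precisely with the displayed one in the corollary.
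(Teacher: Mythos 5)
Your proposal is correct and is exactly the argument the paper intends (the corollary is stated without proof precisely because it is the case $l=1$ of Proposition \ref{prop M} combined with Vieta's expansion $\prod_{j=1}^{m}(t-\lambda_j)=\sum_{k=0}^{m}(-1)^{m-k}\sigma_{m-k}(\Lambda)t^{k}$). Your sign bookkeeping and the identification of the mean-zero condition with $F_0=0$ are both right, so there is nothing to add.
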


\subsection{Poincare-type inequalities on the sphere}

In this subsection we assume $M=\mathbb S^{d-1}$, the standard unit sphere in $\mathbb R^d$. We say $F$ has vanishing spherical harmonics up to order $k$ if $F_0=\cdots=F_k=0$ (\cite{Groemer1996}). It is well-known that $\lambda_{n}=n(n+d-2)$, so by Proposition \ref{prop M} we have the following inequalities.

\begin{enumerate}
\item
Let $m=1$ and $\int_{\mathbb S^{d-1}}F=0$. Then
$C_{1, 1}(t)= t-(d-1)$.
So $c_{1, 0}=-(d-1)$, $c_{1, 1}=1$. Therefore
\begin{align}\label{ineq0}
-\langle \Delta F+(d-1)F, F\rangle =
-(d-1)\langle F, F\rangle-\langle F, \Delta F\rangle \ge0.
\end{align}
This is the Poincare inequality.
\item
Assume $m=2$ and $\int_{\mathbb S^{d-1}}F=0$. We compute
\begin{align*}
C_{1, 2}(t)=(t-(d-1))(t-2d)=t^2-(3d-1)t+2d(d-1).
\end{align*}
So
\begin{align*}
2d(d-1)\langle F, F\rangle -(3d-1)\langle F, -\Delta F\rangle +\langle F, \Delta^2F\rangle\ge0.
\end{align*}
\item
Suppose $F$ has vanishing zeroth and first spherical harmonics, i.e.
$\displaystyle F\sim \sum_{n=2}^{\infty}F_n$, then \eqref{ineq0} gives $\displaystyle -\langle\Delta F, F\rangle \ge \lambda_2\langle F, F\rangle$. As $\lambda_2-\lambda_1=d+1$, this can be expressed as
\begin{align}\label{ineq0'}
-\langle F, \Delta F+(d-1) F\rangle \ge(d+1)\langle F, F\rangle.
\end{align}
\item
We have $ (-\Delta -\lambda_1) (-\Delta-\lambda_2)=[ -\Delta-(d-1) ]^2-(d+1) [-\Delta-(d-1) ] $,
So if $\int_{\mathbb S^{d-1}}F=0$, then by \eqref{prod},
\begin{align}\label{eg4}
0 \le \|\Delta F+(d-1) F\|^{2}-(d+1)\langle F, [-\Delta-(d-1) ] F\rangle.
\end{align}

\item
Similarly,
\begin{align*}
(-\Delta-\lambda_2) (-\Delta-\lambda_3)=B^2-(3d+5)B+2(d+1)(d+2)
\end{align*}
where $B=-\Delta-(d-1)$.
So if $F\sim \sum_{n=2}^{\infty} F_{n}$, then by \eqref{prod},
\begin{align}\label{eg5}
0\le\left\|\left[ \Delta +(d-1) \right] F\right\|^{2}-(3d+5)\left\langle F, \left[ -\Delta -(d-1) \right]F\right\rangle+2(d+1) (d+2) \left\|F\right\|^{2}.
\end{align}
\end{enumerate}
\section{Geometry of convex bodies}
From now on, unless otherwise stated, $\langle \cdot, \cdot\rangle $ and $\|\cdot\|$ denote the inner product and the norm respectively on $\mathbb S^{d-1}$.
\subsection{Single convex body}
We collect some formulas for convex bodies here. Let $K\subset \mathbb R^d$ be a smooth convex body. The support function $h=h_K:\mathbb S^{d-1}\to \mathbb R$ is defined by $h_K(\theta)=\sup_{x\in K} (x\cdot \theta) $.
The Steiner point $z(K)$ of $K$ is defined by (\cite[Sec. 2.6]{Groemer1996})
\begin{align*}
z(K)=\frac{1}{|\mathbb B^d|} \int_{\mathbb S^{d-1}} h_{K}(\theta) \, \theta\, d \theta
\end{align*}
where $|\mathbb B^d|$ is the volume of the unit ball in $\mathbb R^d$ and $d\theta$ denotes the area element of $\mathbb S^{d-1}$.

The ball whose center is the Steiner point and whose diameter is the mean width $\overline w(K):=\frac{2}{|\mathbb S^{d-1}|}\int_{\mathbb S^{d-1}}h_K(\theta) d\theta$ of $K$ will be called the Steiner ball of $K$ and is denoted by $B(K)$.
The support function of the Steiner ball is
\begin{align*}
h_{B(K)}(\theta)=\frac{1}{2} \overline {w}(K)+z(K) \cdot \theta.
\end{align*}
Given $h_K\sim\sum_{n=0}^{\infty}F_n$, we have the formula for the $\delta_2$ distance between $K$ and $B(K)$ \cite[(5.2.3)]{Groemer1996}:
\begin{align}\label{delta2}
\delta_{2}\left(K, B(K)\right)^{2}:=\int_{\mathbb S^{d-1}}(h_K-h_{B(K)})^2\, d\theta=\sum_{n=2}^{\infty}\left\|F_n\right\|^{2}.
\end{align}
Let $\Sigma$ be the boundary of $K$, which is a closed convex hypersurface in $\mathbb R^{d}$. Let $\{e_i\}_{i=1}^{d-1}$ be a local orthonormal basis at $\theta\in \mathbb S^{d-1}$, $\nabla $ be the connection on $\mathbb S^{d-1}$ and $X$ be the position vector, which can be regarded as the inverse Gauss map $X: \mathbb S^{d-1}\to \Sigma$.
By identifying $T\Sigma$ with $T\mathbb S^{d-1}$ by parallel translation (we do not identify $e_i$ with $DX(e_i)$),
\begin{align*}
D X\left(e_{i}\right) =h_{i} \theta+h e_{i}+h_{i j} e_{j}-h_{i} \theta
=h e_{i}+\nabla_{i j} h \;e_{j},
\end{align*}

Let $\nu$ be the unit outward normal to $\Sigma$.
The second fundamental form is represented by the matrix $A=(A_{ij})$ (this is not the same as $A(e_i, e_j)$), where
\begin{align*}
A_{ij}:=\langle D_{DX(e_i)}(\nu), DX(e_j)\rangle_\Sigma=\langle D_{e_i}(\nu\circ X), DX(e_j)\rangle_\Sigma
=&\langle D_{e_i}\theta, DX(e_j)\rangle_{\mathbb S^{d-1}}\\
=&\langle e_i, DX(e_j)\rangle_{\mathbb S^{d-1}}\\
=&h \delta_{ij}+\nabla_{ij} h.
\end{align*}
The first fundamental form of $\Sigma$ is given by
\begin{align*}
\left(g_{i j}\right):=\langle DX(e_i), DX(e_j)\rangle
=A^{2}.
\end{align*}
The shape operator $S$ is defined by $\langle S(Y), Z\rangle_\Sigma=\langle D_Y\nu, Z\rangle_\Sigma $, and its matrix representation is given by
$\left(g^{i j}\right)\left(A_{ jk}\right)=A^{-1}$.

We can without loss of generality assume that $\nabla^2 h$ is diagonal at a given point, from this it is easy to derive the following formulas
\begin{equation}\label{conv formulas}
\begin{split}
\begin{cases}
&H_{d-2}=\frac{1}{d-1} \sigma_{d-2}\left(A^{-1}\right)
=\frac{ \textrm{tr } A }{(d-1)\det A}
=\frac{(d-1)h+\Delta h}{(d-1)\det A} \\
&\frac{1}{H_{d-1}}=\det A \\
&\frac{H_{d-2}}{H_{d-1}}=h+\frac{1}{d-1}\Delta h \\
&d\theta=H_{d-1}dS.
\end{cases}
\end{split}
\end{equation}

It was proved by Steiner that for a convex body $K$, we have the following Steiner expansion formula \cite[4.2]{schneider2014convex}
\begin{align*}
\left| K+t \mathbb B^d \right|=\sum_{i=0}^{d}\binom{d}{i} W_{i}(K) t^{i}
\end{align*}
where $|\cdot|$ stands for the $d$-dimensional volume and $W_i(K)$ is a constant, called the $i$-th quermassintegral of $K$ (see Subsection \ref{mixed} for an equivalent definition). It can be shown that if $K$ is $C^2$, then $W_{d-i}(K)=\frac{1}{d} \int_{\Sigma} H_{d-1-i}dS$ (cf. \cite[4.2]{schneider2014convex}).

The mean width $\overline w$ is given by (\cite[p. 46]{Groemer1996})
\begin{align}\label{H d-2}
\frac{|\mathbb S^{d-1}|}{2}\overline w(K)=d \cdot W_{d-1}(K)=\int_\Sigma H_{d-2}dS=\int_{\Sigma} H_{d-1} \langle X, \nu\rangle dS=
\int_{\mathbb S^{d-1}}h \, d\theta.
\end{align}
Similarly,
\begin{equation} \label{H d-3}
\begin{split}
d\cdot W_{d-2}(K)=\int_\Sigma H_{d-3} =\int_\Sigma H_{d-2}\langle X, \nu\rangle dS
=&\int_{\mathbb S^{d-1}} \frac{h\;\mathrm{tr }A}{(d-1)\det A}\cdot \det A\;d\theta\\
=&\frac{1}{d-1}\int_{\mathbb S^{d-1}} ((d-1)h^2+h\Delta h) d\theta\\
=&\frac{1}{d-1}\langle h, \Delta h+(d-1)h\rangle.
\end{split}
\end{equation}
\subsection{Mixed volume of multiple convex bodies}\label{mixed}
The theory of one convex body can be generalized to the theory of mixed volumes. Let $K_{1}, K_{2}, \ldots, K_{d}$ be convex bodies in $\mathbb R^d$ and consider the function
\begin{align*}
f\left(t_{1}, \ldots, t_{d}\right)=\left| t_{1} K_{1}+\cdots+t_{d} K_{d} \right|, \quad t_{i} \ge 0.
\end{align*}
One can show that $f$ is a homogeneous polynomial of degree $d$, therefore it can be written as (\cite[Sec. 5.1]{schneider2014convex})
\begin{align*}
f\left(t_{1}, \ldots, t_{d}\right)=\sum_{j_{1}, \ldots, j_{d}=1}^{d} V\left(K_{j_{1}}, \ldots, K_{j_d}\right) t_{j_{1}} \cdots t_{j_d}
\end{align*}
where the functions $V$ are symmetric. The coefficient $V\left(K_1, \cdots, K_d\right)$ is called the mixed volume of $K_1, \cdots, K_d$.
We define the mixed volume $V(K, L):=V(K, L, \overbrace{\mathbb B^d, \cdots, \mathbb B^{d}}^{d-2})$ and the quermassintegral
$W_{i}(K):=V (\overbrace{K, \cdots, K}^{d-i}, \overbrace{\mathbb B^d, \cdots, \mathbb B^{d}}^{i})$. It can be shown that (\cite[Prop. 5.1.3 ]{Groemer1996})
\begin{align}\label{v kl}
V(K, L)=\frac{1}{d(d-1)}\left\langle h_{K}, \Delta h_{L}+(d-1) h_{L}\right\rangle=\frac{1}{d(d-1)}\left\langle h_{L}, \Delta h_{K}+(d-1) h_{K}\right\rangle.
\end{align}
This generalizes \eqref{H d-3}.

\section{Applications to Minkowski-type inequalities}
In this subsection, we are going to apply the inequalities we have obtained on the sphere to derive new sharp Minkowski-type inequalities.
These inequalities will be proved in the smooth case, although by approximation they also apply to general convex bodies.
The simplest case of Proposition \ref{prop M} (Poincare inequality) gives the following well-known Minkowski inequality (\cite[Theorem 5.2.1]{Groemer1996}) (at least without the $\delta_2$ term).
To illustrate its relation with Theorem \ref{thm2} and Theorem \ref{thm3}, we sketch its proof here.
\begin{theorem}
[Minkowski inequality]\label{thm1}
If $\Sigma$ is a closed convex hypersurface in $\mathbb R^{d}$, then
\begin{align*}
\int_\Sigma H_{d-3}+ \frac{ d+1 }{d-1}\delta_2(K, B(K))^2 \le \frac{1}{|\mathbb S^{d-1}|}\left(\int_\Sigma H_{d-2}\right)^2.
\end{align*}
\end{theorem}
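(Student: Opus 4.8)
The plan is to transfer the statement to the sphere via the support function $h=h_K$ and then reduce it to the lowest-order Poincar\'e inequality. First I would invoke the two identities already recorded in the excerpt: \eqref{H d-3} rewrites the quantity of interest as $\int_\Sigma H_{d-3}=\frac{1}{d-1}\langle h,\Delta h+(d-1)h\rangle$, and \eqref{H d-2} gives $\int_\Sigma H_{d-2}\,dS=\int_{\mathbb S^{d-1}}h\,d\theta$. After these substitutions the asserted inequality becomes a purely spectral statement about the single function $h$ on $\mathbb S^{d-1}$.

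Next I would expand $h\sim\sum_{n=0}^{\infty}F_n$ into spherical harmonics and set $F:=\sum_{n=2}^{\infty}F_n=h-h_{B(K)}$. Since $\lambda_1=d-1$, the operator $L:=\Delta+(d-1)$ annihilates $H_1$, acts as multiplication by $d-1$ on the constants $H_0$, and preserves the eigenspace decomposition; hence orthogonality of distinct eigenspaces gives $\langle h,\Delta h+(d-1)h\rangle=(d-1)\|F_0\|^2+\langle F,\Delta F+(d-1)F\rangle$, and therefore
\begin{align*}
\int_\Sigma H_{d-3}=\|F_0\|^2+\frac{1}{d-1}\langle F,\Delta F+(d-1)F\rangle .
\end{align*}
The constant term is pinned down by \eqref{H d-2}: the $H_0$-component of $h$ is $F_0=\frac{1}{|\mathbb S^{d-1}|}\int_{\mathbb S^{d-1}}h\,d\theta$, so $\|F_0\|^2=\frac{1}{|\mathbb S^{d-1}|}\left(\int_\Sigma H_{d-2}\right)^2$.

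Finally, $F$ has vanishing zeroth and first spherical harmonics, so \eqref{ineq0'} applies and yields $\langle F,\Delta F+(d-1)F\rangle\le-(d+1)\|F\|^2$; substituting this into the previous display and using $\|F\|^2=\sum_{n=2}^{\infty}\|F_n\|^2=\delta_2(K,B(K))^2$ from \eqref{delta2} gives exactly the claimed inequality, with equality precisely when $h-h_{B(K)}$ is a spherical harmonic of degree $2$ (the equality case of \eqref{ineq0'}). I do not expect a genuine obstacle here: the work is bookkeeping — using $\lambda_1=d-1$ and $\lambda_2-\lambda_1=2d-(d-1)=d+1$ to obtain the sharp constant $\frac{d+1}{d-1}$, keeping the sign of the negative-semidefinite operator $\Delta+(d-1)$ straight on mean-zero functions, and matching the normalizations in \eqref{H d-2}, \eqref{H d-3} and \eqref{delta2}. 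It is also worth noting that convexity of $K$ is used only to guarantee the validity of the support-function formulas in \eqref{conv formulas} and \eqref{H d-3} (so that the inverse Gauss map and $H_{d-1}=1/\det A$ make sense); the underlying inequality on $\mathbb S^{d-1}$ is unconditional.
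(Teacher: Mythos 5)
Your proposal is correct and follows essentially the same route as the paper: apply the sharp Poincar\'e inequality \eqref{ineq0'} to the projection of $h$ onto harmonics of degree $\ge 2$ and translate via \eqref{H d-2}, \eqref{H d-3} and \eqref{delta2}. The only (cosmetic) difference is that you subtract $h_{B(K)}$ outright and split $\langle h,\Delta h+(d-1)h\rangle$ by eigenspace orthogonality, while the paper sets $F=h-\overline h$ and expands the inner product directly, the degree-one harmonic being harmless since $\Delta+(d-1)$ annihilates it.
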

\begin{proof}
Let $F=h-\frac{1}{|\mathbb S^{d-1}|}\int_{\mathbb S^{d-1}}h $. Then $F$ has vanishing zeroth and first spherical harmonics and indeed $F=h-h_{B(K)}$ as $\frac{1}{|\mathbb S^{d-1}|}\int_{\mathbb S^{d-1}}h=\frac{1}{2}\overline w(K)$ (\cite[(2.6.3)]{Groemer1996}).
By \eqref{delta2} and inequality \eqref{ineq0'},
\begin{equation*}\label{mink}
\begin{split}
(d+1)\delta_2(K, B(K))^2=
(d+1)\|F\|^2
\le&\langle F, -\Delta F-(d-1)F\rangle\\
=&-\left\langle h, \Delta h\right\rangle-(d-1)\langle h, h\rangle +\frac{d-1}{|\mathbb S^{d-1}|} \|h\|^2 \\
=&\frac{d-1}{|\mathbb S^{d-1}|}\left(\int_\Sigma H_{d-2}\right)^2-(d-1)\int_\Sigma H_{d-3}.
\end{split}
\end{equation*}
\end{proof}

\begin{remark}
This result is equivalent to $W_{d-1}(K)^{2}-W_d(K) W_{d-2}(K) \ge \kappa_{d} \frac{d+1}{d(d-1)} \delta_{2}\left(K, B(K)\right)^{2}$, as $W_d(K)=\kappa_d=|\mathbb B^d|$.
As illusioned in \cite[Theorem 5.2.2]{Groemer1996}, although $W_{d-2}$ and $W_{d-1}$ seem special, this result can be combined with the Minkowski inequality $W_{k}(K)^{2} \ge W_{k+1}(K) W_{k-1}(K)$ to obtain results relating $W_i$ and $W_j$ for general $i<j$.

Also, if we use the inequality \eqref{ineq0} instead of \eqref{ineq0'}, we get the classical Minkowski inequality
\begin{align*}
\int_{\Sigma} H_{d-3} \le \frac{1}{\left| \mathbb S^{d-1}\right|}\left(\int_{\Sigma} H_{d-2}\right)^{2}
\end{align*}
instead.
\end{remark}

The following result gives an upper bound for the deficit in the Minkowski formula. In the case where $d=2$ and with the convention that $H_{-1}=X\cdot \nu$, this is the inequality $L^{2}-4 \pi A \le \frac{2 \pi}{3}\left(\int_{C} \frac{1}{\kappa} d s-\frac{L^{2}}{2 \pi}\right)$, which is the Lin-Tsai inequality \cite[Lemma 1.7]{LinTsai2012}.
\begin{theorem}\label{thm2}
If $\Sigma$ is a closed convex hypersurface in $\mathbb R^{d}$, then
\begin{align*}
\frac{1}{\left| \mathbb S^{d-1}\right|}\left(\int_{\Sigma} H_{d-2}\right)^{2}-
\int_{\Sigma} H_{d-3}
\le \frac{d-1}{ d+1 } \left[ \int_{\Sigma} \frac{{H_{d -2}}^{2}}{H_{d-1}} -\frac{1}{ |\mathbb S^{d-1}|} \left(\int_{\Sigma} H_{d-2}\right)^{2}\right].
\end{align*}
\end{theorem}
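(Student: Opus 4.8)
The plan is to feed the normalized support function into the second-order Poincar\'e-type inequality \eqref{eg4} and then translate every term into the curvature integrals of the statement via the dictionary \eqref{conv formulas}, \eqref{H d-2}, \eqref{H d-3}, exactly in the spirit of the proof of Theorem \ref{thm1}.

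First I would set $F=h-c$ with $c=\frac{1}{|\mathbb S^{d-1}|}\int_{\mathbb S^{d-1}}h\,d\theta$. Then $\int_{\mathbb S^{d-1}}F\,d\theta=0$, so \eqref{eg4} applies and gives $0\le\|\Delta F+(d-1)F\|^{2}-(d+1)\langle F,[-\Delta-(d-1)]F\rangle$. Since $\Delta$ annihilates constants, $\Delta F+(d-1)F=\Delta h+(d-1)h-(d-1)c$, and the third line of \eqref{conv formulas} rewrites this as $(d-1)\big(\tfrac{H_{d-2}}{H_{d-1}}-c\big)$. This is the single computation that drives everything.

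Next I would expand the two terms of \eqref{eg4} using $d\theta=H_{d-1}\,dS$. For the first, $\|\Delta F+(d-1)F\|^{2}=(d-1)^{2}\int_{\mathbb S^{d-1}}\big(\tfrac{H_{d-2}}{H_{d-1}}-c\big)^{2}d\theta=(d-1)^{2}\big[\int_{\Sigma}\tfrac{H_{d-2}^{2}}{H_{d-1}}\,dS-2c\int_{\Sigma}H_{d-2}\,dS+c^{2}|\mathbb S^{d-1}|\big]$; by \eqref{H d-2} one has $\int_{\Sigma}H_{d-2}\,dS=\int_{\mathbb S^{d-1}}h\,d\theta=c\,|\mathbb S^{d-1}|$, so the last two terms combine into $-c^{2}|\mathbb S^{d-1}|=-\frac{1}{|\mathbb S^{d-1}|}\big(\int_{\Sigma}H_{d-2}\big)^{2}$, and $\|\Delta F+(d-1)F\|^{2}$ equals $(d-1)^{2}$ times the bracket on the right-hand side of the theorem. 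For the second term, $\langle F,[-\Delta-(d-1)]F\rangle=-(d-1)\int_{\mathbb S^{d-1}}(h-c)\big(\tfrac{H_{d-2}}{H_{d-1}}-c\big)d\theta$; expanding, using $\tfrac{H_{d-2}}{H_{d-1}}=h+\tfrac1{d-1}\Delta h$ together with \eqref{H d-3} to identify $\int_{\mathbb S^{d-1}}h\cdot\tfrac{H_{d-2}}{H_{d-1}}\,d\theta=\tfrac1{d-1}\langle h,\Delta h+(d-1)h\rangle=\int_{\Sigma}H_{d-3}$, and \eqref{H d-2} once more for the mixed $c$-terms, this collapses to $(d-1)\big[\frac{1}{|\mathbb S^{d-1}|}\big(\int_{\Sigma}H_{d-2}\big)^{2}-\int_{\Sigma}H_{d-3}\big]$, i.e. $(d-1)$ times the left-hand side of the theorem.

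Substituting both identities into \eqref{eg4} gives $(d+1)(d-1)\cdot(\text{left-hand side})\le(d-1)^{2}\cdot(\text{bracket})$, and dividing by $(d+1)(d-1)>0$ yields the claim. The argument is essentially bookkeeping; the only point demanding care is the repeated appeal to \eqref{H d-2} in the form $\int_{\Sigma}H_{d-2}\,dS=\int_{\mathbb S^{d-1}}h\,d\theta$, which is precisely what forces all cross terms in $c$ to cancel and leaves the two ``normalized variance'' quantities in the statement. (If desired, the equality case can be read off from the equality clause of \eqref{eg4}, namely $F=F_{1}+F_{2}$, i.e.\ $h$ has vanishing spherical harmonics of order $\ge 3$.)
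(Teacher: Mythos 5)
Your proposal is correct and follows essentially the same route as the paper: the same choice $F=h-\overline h$, the same application of \eqref{eg4}, and the same identifications via \eqref{conv formulas}, \eqref{H d-2}, \eqref{H d-3} (your bookkeeping of the second term matches the paper's \eqref{B1} after cancellation of the $c$-cross terms). The equality-case remark via Proposition \ref{prop M} is also consistent with the paper's framework.
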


\begin{proof}

Let $\displaystyle F=h - \overline h $, where $ \displaystyle \overline h =\frac{1}{\left| \mathbb S^{d-1}\right|} \int_{\mathbb S^{d-1}} hd\theta=\frac{1}{\left|\mathbb S^{d-1}\right|} \int_\Sigma H_{d-2}$ by \eqref{H d-2}.

By \eqref{eg4},
\begin{equation} \label{thm 2 ineq}
0 \le \|(\Delta+(d-1)) F\|^{2}+(d+1)\langle F, [\Delta+(d-1)] F\rangle.
\end{equation}

By \eqref{conv formulas},
\begin{equation}\label{B2}
\begin{split}
\frac{1}{(d-1)^2}\|(\Delta+(d-1)) F\|^{2}
=&\int_{\mathbb S^{d-1}}\left(\frac{\Delta h}{d-1}+h-\overline h \right)^{2} \\
=&\int_{\mathbb S^{d-1}}\left(\frac{\Delta h}{d-1}+h\right)^{2}-\frac{1}{| \mathbb S^{d-1}|} \left(\int_{\mathbb S^{d-1}} h\right)^{2} \\
=&\int_{\Sigma} \frac{{H_{d -2}}^{2}}{H_{d-1}} -\frac{1}{\left|\mathbb S^{d-1}\right|}\left(\int_{\Sigma} H_{d-2}\right)^{2}.
\end{split}
\end{equation}

On the other hand, by \eqref{H d-3},
\begin{equation}\label{B1}
\begin{split}
\langle F, [\Delta+(d-1)] F\rangle
=&\int_{\mathbb S^{d-1}}\left[(d-1)(h-\overline h)^{2}+h \Delta h\right]\\
=&\int_{\mathbb S^{d-1}}\left[(d-1) h^{2}+h \Delta h\right]-\frac{d-1}{\left|\mathbb S^{d-1}\right|}\left(\int_\Sigma H_{d -2}\right)^{2}\\
=&(d-1) \int_{\Sigma} H_{d -3}-\frac{d-1}{\left|\mathbb S^{d-1}\right|}\left(\int_\Sigma H_{d-2}\right)^{2}.
\end{split}
\end{equation}

So \eqref{thm 2 ineq} becomes
\begin{align*}
0\le(d-1)^{2}\left(\int_{\Sigma} \frac{{H_{d -2}}^{2}}{H_{d-1}} -\frac{1}{ |\mathbb S^{d-1}|} \left(\int_{\Sigma} H_{d-2}\right)^{2}\right)+
(d+1)\left[ (d-1) \int_{\Sigma} H_{d-3}-\frac{d-1}{\left| \mathbb S^{d-1}\right|}\left(\int_{\Sigma} H_{d-2}\right)^{2}\right].
\end{align*}
\end{proof}

The following result gives a reverse inequality to Theorem \ref{thm1} and improves Theorem \ref{thm2} at the same time.
\begin{theorem}\label{thm3}
If $\Sigma=\partial K$ is a closed convex hypersurface in $\mathbb R^{d}$, then

\begin{align*}
&\frac{ d-1 }{d+1}\left[\int_{\Sigma} \frac{{H_{d-2}}^{2}}{H_{d-1}}-\frac{1}{\left| \mathbb S^{d-1}\right|}\left(\int_{\Sigma} H_{d-2}\right)^{2}\right]-\left[\frac{1}{\left| \mathbb S^{d-1}\right|}\left(\int_{\Sigma} H_{d-2}\right)^{2}-\int_{\Sigma} H_{d-3}\right]\\
\ge&\frac{2(d+2)}{d+1}
\left[ \frac{1}{\left| \mathbb S^{d-1}\right|}\left(\int_{\Sigma} H_{d-2}\right)^{2}-\int_{\Sigma} H_{d-3}
-\frac{ d+1 }{d-1}\delta_{2}\left(K, B(K)\right)^{2} \right].
\end{align*}
\end{theorem}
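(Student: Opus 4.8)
The plan is to repeat the scheme used for Theorems \ref{thm1} and \ref{thm2}, this time feeding \eqref{eg5} with the function $F = h - h_{B(K)}$. The choice is essentially forced: \eqref{eg5} requires $F \sim \sum_{n=2}^{\infty} F_n$, i.e.\ vanishing zeroth and first spherical harmonics, and $h - h_{B(K)}$ has precisely this property. It is convenient to abbreviate $\mathcal A = \int_\Sigma H_{d-2}^2/H_{d-1}$, $\mathcal B = |\mathbb S^{d-1}|^{-1}\left(\int_\Sigma H_{d-2}\right)^2$, $\mathcal C = \int_\Sigma H_{d-3}$ and $\mathcal D = \delta_2(K,B(K))^2$, so that the claimed inequality reads
\[
\frac{d-1}{d+1}(\mathcal A - \mathcal B) - (\mathcal B - \mathcal C) \ge \frac{2(d+2)}{d+1}\left[(\mathcal B - \mathcal C) - \frac{d+1}{d-1}\,\mathcal D\right].
\]

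The preliminary step is to notice that the two bilinear terms occurring in \eqref{eg5} at $F = h - h_{B(K)}$ are exactly those already computed in the proof of Theorem \ref{thm2}. Write $\widetilde F := h - \overline h$ for the function appearing there (with $\overline h$ the mean of $h$ over $\mathbb S^{d-1}$). Then $F$ and $\widetilde F$ differ only by the first spherical harmonic component $F_1 \in H_1$ of $h$, and since $\lambda_1 = d-1$ we have $(\Delta+(d-1))F_1 = 0$; moreover $(\Delta+(d-1))\widetilde F$ lies in $\bigoplus_{n\ge 2}H_n$, which is orthogonal to $H_1$. Consequently $\|(\Delta+(d-1))F\|^2$ and $\langle F,(\Delta+(d-1))F\rangle$ coincide with their counterparts for $\widetilde F$, so \eqref{B2} and \eqref{B1} may be invoked verbatim, whereas $\|F\|^2 = \sum_{n\ge 2}\|F_n\|^2 = \mathcal D$ by \eqref{delta2}. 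Substituting these three facts into \eqref{eg5} yields
\[
0 \le (d-1)^2(\mathcal A - \mathcal B) - (3d+5)(d-1)(\mathcal B - \mathcal C) + 2(d+1)(d+2)\,\mathcal D.
\]

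The final step is elementary algebra: dividing by $(d-1)$ and using the splitting $3d+5 = (d+1) + 2(d+2)$ (which separates out the ``Theorem \ref{thm2} part'' $-(d+1)(\mathcal B - \mathcal C)$ and the ``Theorem \ref{thm1} part'' $-2(d+2)(\mathcal B - \mathcal C)$), the last inequality rearranges exactly into the claimed one. I do not foresee a genuine obstacle. The one point that needs care is the bookkeeping in the second paragraph — confirming that replacing $\overline h$ by $h_{B(K)}$ leaves the quadratic forms built from $\Delta+(d-1)$ untouched while converting $\|F\|^2$ into $\delta_2(K,B(K))^2$; verifying the polynomial identity behind \eqref{eg5} (from $\lambda_2 = 2d$ and $\lambda_3 = 3(d+1)$) and carrying out the concluding rearrangement are both routine.
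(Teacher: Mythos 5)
Your proposal is correct and follows essentially the same route as the paper: apply \eqref{eg5} to $F=h-h_{B(K)}$, evaluate the three quadratic terms via \eqref{B2}, \eqref{B1} and \eqref{delta2}, and rearrange using $3d+5=(d+1)+2(d+2)$, which is exactly the paper's rewriting \eqref{thm 3 ineq}. If anything, your bookkeeping is slightly more careful than the paper's, since you justify explicitly (via $(\Delta+(d-1))F_1=0$ and orthogonality to $H_1$) that replacing $h-\overline h$ by $h-h_{B(K)}$ leaves \eqref{B2} and \eqref{B1} unchanged, where the paper tacitly takes the Steiner point at the origin.
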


\begin{proof}

We use the same notation and assumption in Theorem \ref{thm1}.
We rewrite \eqref{eg5} as
\begin{equation}
\begin{split}
\label{thm 3 ineq}
&\left\|\Delta F+(d-1) F\right\|^{2}-(d+1)\left\langle F, [ -\Delta-(d-1) ] F\right\rangle\\
&-2(d+2)\left[\langle F, (-\Delta-(d-1)) F\rangle-(d+1)\|F\|^2\right]\ge0.
\end{split}
\end{equation}

As in Theorem \ref{thm2},
\begin{align*}
&\|\Delta F+(d-1) F\|^{2}-(d+1)\langle F, (-\Delta-(d-1)) F\rangle\\
=&(d-1)^2
\left[ \int_{\Sigma} \frac{{H_{d-2}}^{2}}{H_{d-1}}-\frac{1}{\left| \mathbb S^{d-1}\right|}\left(\int_{\Sigma} H_{d-2}\right)^{2}\right]-(d+1)(d-1)
\left[\frac{1}{|\mathbb S^{d-1}|}\left(\int_{\Sigma}H_{d-2}\right)^{2}-\int_{\Sigma} H_{d-3}\right].
\end{align*}
As in Theorem \ref{thm1},
\begin{align*}
&\langle F, (-\Delta-(d-1)) F\rangle+(d+1)\|F\|^{2}\\
=&(d-1)\left(\frac{1}{\left|\mathbb S^{d-1}\right|}\left(\int_{\Sigma} H_{d-2} \right)^{2} - \int_{\Sigma} H_{d-3}\right)- (d+1)\delta_{2}\left(K, B(K)\right)^{2}.
\end{align*}
Putting these into \eqref{thm 3 ineq} gives the result.
\end{proof}

In \cite[Theorem 5]{kwong2020higher}, the authors proved some sharp upper and lower bounds for the isoperimetric deficit of a convex curve which involves arbitrarily high order of the curvature. For example, it was proved that $0 \le\left(\int_{ C } \frac{1}{\kappa} d s-\frac{L^{2}}{2 \pi}\right)-\frac{3}{2 \pi}\left(L^{2}-4 \pi A\right) \le \frac{1}{12}\left[\int_{ C } \frac{1}{\kappa^{5}}\left(\frac{d \kappa}{d s}\right)^{2} d s-\frac{6}{\pi}\left(L^{2}-4 \pi A\right)\right]$. The following result is the extension to higher dimensions.

\begin{theorem}
Let $m\in \mathbb N$.
If $\Sigma$ is a closed convex hypersurface in $\mathbb R^{d}, $ then under the inverse Gauss map representation,
\begin{align*}
0\le&(-1)^{m} \sum_{i=1}^{m-2} c_{i}\left\langle\Delta^{i} \rho, p\right\rangle \\
&+(-1)^{m} \frac{(d-1)^{2}(m-1)(m-1) !(m+d-2) !}{d !}\left[\int_{\Sigma} \frac{H_{d-2}^{2}}{H_{d-1}}-\frac{1}{\left| \mathbb S^{d-1}\right|}\left(\int_{\Sigma} H_{d-2}\right)^{2}\right] \\
&+(-1)^{m-1} \frac{(d-1)(m-1) !(m+d-1) !}{d !}\left[\frac{1}{\left| \mathbb S^{d-1}\right|}\left(\int_{\Sigma} H_{d-2}\right)^{2}-\int_{\Sigma} H_{d-3}\right].
\end{align*}

Here $\rho=\frac{H_{d-2}}{H_{d-1}}$, $\lambda_{n}=n(n+d-2)$, $\Delta$ is the spherical Laplacian,
$c_i=\sum_{i=1}^{m-2} \sum_{l=i}^{m-2} \sigma_{m-2-l}(\Gamma)\binom{l}{i} \lambda_1^{l-i+2} $, and $\Gamma=\left\{0=\gamma_1, \gamma_{2}, \cdots, \gamma_{m}\right\}$ where $\gamma_{n}=\lambda_{n}-\lambda_{1}=(n-1)(n+d-1)$.
\end{theorem}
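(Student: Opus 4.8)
The plan is to run the same argument as in Theorems \ref{thm2} and \ref{thm3}, but starting from the fully general Proposition \ref{prop M} (or its product form \eqref{prod}) applied on $\mathbb S^{d-1}$ with $l=2$, rather than from the special cases $m=1,2$. Concretely, set $F=h-\overline h=h-h_{B(K)}$, so that $F\sim\sum_{n=2}^\infty F_n$, i.e. $F$ has vanishing zeroth and first spherical harmonics. Then $\langle \prod_{j=2}^{m}(-\Delta-\lambda_j)F,F\rangle\ge 0$. The key bookkeeping trick, already used implicitly in the introduction and in \eqref{eg4}--\eqref{eg5}, is to expand this product not in powers of $-\Delta$ but in powers of the shifted operator $B:=-\Delta-\lambda_1=-\Delta-(d-1)$, because $-\Delta-\lambda_j=B-(\lambda_j-\lambda_1)=B-\gamma_j$ with $\gamma_j=(j-1)(j+d-1)$. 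Thus $\prod_{j=2}^m(-\Delta-\lambda_j)=\prod_{j=2}^m(B-\gamma_j)=\sum_{k} (-1)^{m-k}\,e_{m-k}(\gamma_2,\dots,\gamma_m)\,B^{k}$ where the elementary symmetric functions are taken over $\Gamma=\{0,\gamma_2,\dots,\gamma_m\}$ (the spurious $\gamma_1=0$ is harmless). Since $\gamma_1=0$, one factor of $B$ always appears, which is why the sum in the statement runs over $B^{k}$ with $k\ge 1$ and the lowest-order terms are the $\langle F,BF\rangle$ and $\|BF\|^2$ pieces that were already interpreted geometrically.

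The next step is to identify the three lowest-order pieces with the geometric quantities appearing in Theorems \ref{thm1} and \ref{thm2}. The $k=1$ term gives a multiple of $\langle F,BF\rangle=\langle F,[-\Delta-(d-1)]F\rangle$, which by \eqref{B1} equals $(d-1)\big[\tfrac{1}{|\mathbb S^{d-1}|}(\int_\Sigma H_{d-2})^2-\int_\Sigma H_{d-3}\big]$; the $k=2$ term gives a multiple of $\|BF\|^2=\|\Delta F+(d-1)F\|^2$, which by \eqref{B2} equals $(d-1)^2\big[\int_\Sigma \tfrac{H_{d-2}^2}{H_{d-1}}-\tfrac1{|\mathbb S^{d-1}|}(\int_\Sigma H_{d-2})^2\big]$; and the higher terms $k\ge 3$ are $\langle B^kF,F\rangle$, which, writing $B=-\Delta-(d-1)$ acting on $F=h-\overline h$ (noting $(-\Delta-(d-1))\overline h=-(d-1)\overline h$, a constant, so $B^k F$ for $k\ge 1$ involves only derivatives of $h$ up to the constant shift), one rewrites using $BF=(d-1)(\rho-\overline h)$ with $\rho=\tfrac{H_{d-2}}{H_{d-1}}=h+\tfrac1{d-1}\Delta h$ from \eqref{conv formulas}, so that $B^k F=(d-1)B^{k-1}(\rho-\overline h)$, and $\langle B^kF,F\rangle=(d-1)\langle B^{k-1}\rho,F\rangle$ up to boundary-free constant terms that vanish by orthogonality. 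Re-expanding $B^{k-1}=(-\Delta-(d-1))^{k-1}=\sum_i \binom{k-1}{i}(-\Delta)^i(-(d-1))^{k-1-i}$ collects the inner products $\langle \Delta^i\rho,p\rangle$ (here $p$ presumably denotes $F=h-\overline h$ up to normalization, so this is a matter of matching the paper's notation) with the combinatorial coefficients $c_i=\sum_l \sigma_{m-2-l}(\Gamma)\binom{l}{i}\lambda_1^{l-i+2}$ appearing in the statement; the extra powers of $\lambda_1=d-1$ come precisely from the shift $B=-\Delta-\lambda_1$ and the two factored-out $(d-1)$'s. Finally, I would pin down the overall sign $(-1)^m$ and the explicit constants $\tfrac{(d-1)^2(m-1)(m-1)!(m+d-2)!}{d!}$ and $\tfrac{(d-1)(m-1)!(m+d-1)!}{d!}$ by evaluating $e_{m-2}(\Gamma)$ and $e_{m-1}(\Gamma)$ with $\gamma_n=(n-1)(n+d-1)$: since $\gamma_n=(n-1)(n+d-1)$, one has $\prod_{n=2}^{m}\gamma_n=\prod_{n=2}^m (n-1)(n+d-1)=(m-1)!\cdot\tfrac{(m+d-1)!}{d!}$, giving $e_{m-1}(\Gamma)$ directly (the term omitting $\gamma_1=0$), and $e_{m-2}(\Gamma)=\sum_{j=2}^m \prod_{n=2,n\ne j}^m\gamma_n$, which telescopes to the stated $(m-1)\cdot(m-1)!\cdot(m+d-2)!/d!$ after using $\sum_{j=2}^m \tfrac1{\gamma_j}\cdot\prod\gamma_n$-type manipulation; these are elementary but slightly delicate factorial identities.

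The main obstacle will be exactly this last bookkeeping: keeping the index ranges, the phantom root $\gamma_1=0$, the two different symmetric-function alphabets ($\Lambda=(\lambda_1,\dots,\lambda_m)$ versus $\Gamma=(0,\gamma_2,\dots,\gamma_m)$), and the constant/mean-zero corrections all consistent, and then matching them to the precise closed-form constants and the definition of $c_i$ in the statement. None of it is conceptually hard once one commits to expanding everything in the operator $B=-\Delta-(d-1)$, but the translation between $\langle B^kF,F\rangle$ and $\langle\Delta^i\rho,p\rangle$, and the verification that the two extracted low-order brackets reproduce the Minkowski-deficit quantity $\tfrac1{|\mathbb S^{d-1}|}(\int_\Sigma H_{d-2})^2-\int_\Sigma H_{d-3}$ and the $\delta_2$-free quantity $\int_\Sigma \tfrac{H_{d-2}^2}{H_{d-1}}-\tfrac1{|\mathbb S^{d-1}|}(\int_\Sigma H_{d-2})^2$ with the stated coefficients, require care. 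Once \eqref{prod} with $l=2$ is in hand and the three geometric substitutions \eqref{B1}, \eqref{B2}, and $BF=(d-1)(\rho-\overline h)$ are applied term by term, the displayed inequality follows by grouping terms; I would present it as: expand \eqref{prod}, substitute, and read off the coefficients, relegating the factorial identities for $e_{m-1}(\Gamma)$ and $e_{m-2}(\Gamma)$ to a short computation.
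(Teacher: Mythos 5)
Your overall strategy is the paper's own: shift to $B=-\Delta-(d-1)$, expand the product in powers of $B$ with elementary symmetric functions of $\Gamma$, convert the two lowest-order pieces via \eqref{B1} and \eqref{B2}, and evaluate the factorial constants. But there is a concrete error in the setup that would derail exactly those constants. You start from \eqref{prod} with $l=2$, i.e.\ from the degree-$(m-1)$ operator $\prod_{j=2}^m(-\Delta-\lambda_j)$, and then expand it as $\sum_k(-1)^{m-k}\sigma_{m-k}(\Gamma)B^k$, asserting that ``since $\gamma_1=0$ one factor of $B$ always appears.'' That identity is false: $\prod_{j=2}^m(B-\gamma_j)$ has degree $m-1$ and nonzero constant term $(-1)^{m-1}\gamma_2\cdots\gamma_m$ (test $m=2$: $B-\gamma_2\neq B^2-\gamma_2B$). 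What your expansion actually describes is the full degree-$m$ operator $P=\prod_{j=1}^m(-\Delta-\lambda_j)=B\prod_{j=2}^m(B-\gamma_j)$, and that is what the paper uses, applied to $F=h-\overline h$ under the mean-zero hypothesis alone (i.e.\ $l=1$ in Proposition \ref{prop M}). The closed-form constants in the statement are precisely $(-1)^m\bigl[\prod_{j=1}^m\lambda_j-\lambda_1\prod_{j=2}^m\gamma_j\bigr]$ and $(-1)^{m-1}\lambda_1\prod_{j=2}^m\gamma_j$, which only arise from that degree-$m$ product; if you genuinely run the argument with the $(m-1)$-factor product from $l=2$, the lowest-order term is a multiple of $\|F\|^2=\delta_2(K,B(K))^2$, which does not appear in the statement, and all coefficients come out different, so the stated theorem is not what you would prove.

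There is a second bookkeeping divergence. For the higher powers you propose $\langle B^kF,F\rangle=(d-1)\langle B^{k-1}\rho,F\rangle$ and guess that $p$ denotes $F$. The paper instead splits symmetrically, $\langle B^{l+2}F,F\rangle=\langle B^l(BF),BF\rangle=\lambda_1^2\langle B^l(\rho-\overline h),\rho-\overline h\rangle$ (note also $BF=-(d-1)(\rho-\overline h)$, not $+(d-1)(\rho-\overline h)$, though the sign is harmless in this quadratic pairing), and then expands $B^l$ in powers of $-\Delta$. This symmetric pairing is what produces the coefficients $\binom{l}{i}\lambda_1^{l-i+2}$ appearing in $c_i$, routes the leftover $i=0$ pieces into the bracket $\int_\Sigma \frac{H_{d-2}^2}{H_{d-1}}-\frac{1}{|\mathbb S^{d-1}|}\bigl(\int_\Sigma H_{d-2}\bigr)^2$, and shows that $p$ in the statement is $\rho$ (a typo), not $F$. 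With your pairing the binomials come out as $\binom{l+1}{i}$ and the leftover constants land on $\langle\rho-\overline h,\,h-\overline h\rangle$, i.e.\ on the $\int_\Sigma H_{d-3}$-type bracket instead, so the stated $c_i$ and the two factorial coefficients would not be reproduced without redoing the computation. Your evaluation $\prod_{n=2}^m\gamma_n=(m-1)!\,(m+d-1)!/d!$ and the resulting constants are correct, but they attach to the right terms only after the two corrections above.
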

\begin{proof}
Recall that on $\mathbb S^{d-1}$, $\lambda_n=n(n+d-2)$.
Let $B=-\Delta -\lambda_1$ and $\gamma_n=\lambda_n-\lambda_1=(n-1)(n+d-1)$. Then $\langle PF, F\rangle \ge 0$ whenever $\int_{\mathbb S^{d-1}}F=0$, where $P=\prod_{n=1}^m (B-\gamma_n) $.
Expanding $P$ gives
\begin{align}\label{P}
P
=&\sum_{l=1}^{m-2}(-1)^{m-l} \sigma_{m-2-l}(\Gamma) B^{l+2}+(-1)^{m-2} \sigma_{m-2}(\Gamma) B^{2}+(-1)^{m-1} \sigma_{m-1}(\Gamma) B.
\end{align}

Let $F=h-\overline h$ and $\rho=\frac{H_{d-2}}{H_{d-1}}$, then $BF=B(h-\overline {h})=-\lambda_{1} \rho+\lambda_{1} \overline {h}$.
By \eqref{B2}, we have
\begin{align*}
\langle B^{l+2}F, F\rangle
=&{\lambda_1}^2\left\langle B^{l}(-\rho+\overline {h}), -\rho+\overline {h}\right\rangle \\
=& {\lambda_1}^2 \sum_{i=1}^{l} {\binom{l}{i}}(-\lambda_1)^{l-i}\left\langle(-\Delta)^{i}(-\rho + \overline {h}), -\rho+\overline {h}\right\rangle+(-\lambda_1)^{l+2}\|-\rho+\overline h\|^2\\
=&\sum_{i=1}^{l} {\binom{l}{i}}(-\lambda_1)^{l-i+2}\left\langle(-\Delta)^{i} \rho, \rho \right\rangle+(-\lambda_1)^{l+2}
\left[\int_{\Sigma} \frac{{H_{d-2}}^{2}}{H_{d-1}}-\frac{1}{\left|\mathbb S^{d-1}\right|}\left(\int_{\Sigma} H_{d-2}\right)^{2}\right].
\end{align*}

So by \eqref{P}, \eqref{B2} and \eqref{B1},
\begin{equation}\label{PF}
\begin{split}
0\le&\langle PF, F\rangle\\
=& \sum_{l=1}^{m-2}(-1)^{m-l} \sigma_{m-2-l}(\Gamma) \left\{ \sum_{i=1}^{l}\binom{l}{i}\left(-\lambda_{1}\right)^{l-i+2}\left\langle(-\Delta)^{i} \rho, \rho\right\rangle\right. \\
&\left. +\left(-\lambda_{1}\right)^{l+2}\left[\int_{\Sigma} \frac{{H_{d-2}}^{2}}{H_{d-1}}-\frac{1}{\left| \mathbb S^{d-1}\right|}\left(\int_{\Sigma} H_{d-2}\right)^{2}\right]\right\}\\
&+(-1)^{m} \sigma_{m-2}(\Gamma) \lambda_1^2\left[\int_{\Sigma} \frac{{H_{d-2}}^{2}}{H_{d-1}}-\frac{1}{\left| \mathbb S^{d-1}\right|}\left(\int_{\Sigma} H_{d-2}\right)^{2}\right]\\
&+(-1)^{m-1} \sigma_{m-1}(\Gamma) \left[\frac{d-1}{\left| \mathbb S^{d-1}\right|}\left(\int_{\Sigma} H_{d-2}\right)^{2}-(d-1) \int_{\Sigma} H_{d-3}\right].
\end{split}
\end{equation}
We compute
\begin{equation}\label{ci}
\begin{split}
&\sum_{l=1}^{m-2} \sum_{i=1}^{l}(-1)^{m-l} \sigma_{m-2-l}\left(\Gamma\right)\binom{l}{i}(-\lambda_1)^{l-i+2}(-\Delta)^{i}\\
=&\sum_{i=1}^{m-2} \sum_{l=i}^{m-2}(-1)^{m-l} \sigma_{m-2-l}\left(\Gamma\right)\binom{l}{i}(-\lambda_1)^{l-i+2}(-\Delta)^{i}\\
=&(-1)^m\sum_{i=1}^{m-2}\left[\sum_{l=i}^{m-2} \sigma_{m-2-l}(\Gamma)\binom{l}{i} \lambda_1^{l+2}\right]\left(\frac{\Delta}{\lambda_1}\right)^{i}.
\end{split}
\end{equation}
We compute the coefficient of $\left[\int_{\Sigma} \frac{H_{d-2}^{2}}{H_{d-1}}-\frac{1}{\left|\mathbb S^{d-1}\right|}\left(\int_{\Sigma} H_{d-2}\right)^{2}\right]$ in \eqref{PF} to be
\begin{equation}\label{coeff1}
\begin{split}
(-1)^{m}\left[\sum_{l=0}^{m-2} \sigma_{m-2-l}(\Gamma) \lambda_{1}^{l+2}\right]
=&(-1)^{m}\left[\prod_{j=1}^{m} \lambda_{j}-\lambda_{1} \prod_{j=2}^{m} \gamma_{j}\right]\\
=&(-1)^{m} \frac{(d-1)^{2}(m-1)(m-1) !(m+d-2) !}{d !}.
\end{split}
\end{equation}
We compute the coefficient of $\left[\frac{1}{\left| \mathbb S^{d-1}\right|}\left(\int_{\Sigma} H_{d-2}\right)^{2}- \int_{\Sigma} H_{d-3}\right]$ in \eqref{PF} to be
\begin{align}\label{coeff2}
(-1)^{m-1} \lambda_{1} \sigma_{m-1}(\Gamma)=(-1)^{m-1} \lambda_{1} \prod_{j=2}^{m} \gamma_{j}=(-1)^{m-1} \frac{(d-1)(m-1) !(m+d-1) !}{d !}.
\end{align}
Putting \eqref{ci}, \eqref{coeff1}, \eqref{coeff2} into \eqref{PF}, we get the result.
\end{proof}

The above analysis can also be carried out in the setting of mixed volumes.
It is well-known that for two convex bodies $K$ and $L$, we have the Aleksandrov-Fenchel inequality (\cite[Theorem 5.2.3]{Groemer1996}, \cite[Theorem 7.3.2]{schneider2014convex})
$$V(K, L)^2-W_{d-2}(K)W_{d-2}(L)\ge 0. $$
When $d=2$ and $L=\mathbb B^2$, this reduces to the isoperimetric inequality for the convex curve $\partial K$.
We now prove a reverse Aleksandrov-Fenchel inequality.

\begin{theorem}\label{mixed}
If $K$ and $L$ are convex bodies in $\mathbb R^{d}$, translated so that they have the same Steiner point,
then
\begin{align*}
&V(K, L)^{2}-W_{d-2}(K) W_{d-2}(L)\\
\le&\left(\frac{ \overline w (K)}{\overline w(L)}W_{d-2}(L)-V(K, L)\right)^{2}\\
&+W_{d-2}(L)
\left[ \frac{ d-1 }{(3 d+5)d} \int_{\mathbb S^{d-1}}\left(\frac{H_{d-2}(K)}{H_{d-1}(K)}- \frac{\overline {w}(K) H_{d-2}(L)}{ \overline {w}(L) H_{d-1}(L)}\right)^{2} d \theta+\frac{2(d+1)(d+2)}{d(3 d+5)(d-1)}\delta_{2}(K, L)^{2} \right].
\end{align*}
\end{theorem}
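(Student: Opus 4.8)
The plan is to mimic the single-body arguments (Theorems \ref{thm1}, \ref{thm2}, \ref{thm3}) but now working with the bilinear pairing $V(K,L)$ via formula \eqref{v kl}, and to use inequality \eqref{eg5} applied to a cleverly chosen mean-zero function built from $h_K$. Specifically, recall $V(K,L)=\frac{1}{d(d-1)}\langle h_L,\Delta h_K+(d-1)h_K\rangle$ and $W_{d-2}(L)=\frac{1}{d(d-1)}\langle h_L,\Delta h_L+(d-1)h_L\rangle$ (the $K=L$ case). The natural substitute for $F=h-\overline h$ is
\begin{align*}
F=h_K-\frac{\overline w(K)}{\overline w(L)}h_L,
\end{align*}
which is the ``$L$-adapted'' centering of $h_K$. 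Because $K$ and $L$ share the same Steiner point and the ratio of mean widths is chosen to match zeroth harmonics, $F$ has vanishing zeroth and first spherical harmonics, so $F\sim\sum_{n\ge2}F_n$ and \eqref{eg5} applies. One also gets $\delta_2(K,L)^2:=\sum_{n\ge2}\|(h_K)_n-(h_L)_n\|^2$-type control; more precisely the last term of \eqref{eg5} is $\|F\|^2$, which should be related to $\delta_2(K,L)^2$ up to the mean-width scaling.

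First I would expand the three quantities appearing in \eqref{eg5} — namely $\|\Delta F+(d-1)F\|^2$, $\langle F,[-\Delta-(d-1)]F\rangle$, and $\|F\|^2$ — in terms of $h_K$ and $h_L$. The middle term: using \eqref{v kl} and the $K=L$ specialization, $\langle F,[-\Delta-(d-1)]F\rangle$ expands into $-d(d-1)V(K,L)$-type pieces plus $d(d-1)\frac{\overline w(K)^2}{\overline w(L)^2}W_{d-2}(L)$ and cross terms, all after subtracting the (vanishing) zeroth-harmonic contributions. The first term: here I would write $\Delta h_K+(d-1)h_K=(d-1)H_{d-1}\cdot\frac{H_{d-2}}{H_{d-1}}$-style via \eqref{conv formulas}, so that $\|\Delta F+(d-1)F\|^2$ becomes $(d-1)^2\int_{\mathbb S^{d-1}}\left(\frac{H_{d-2}(K)}{H_{d-1}(K)}-\frac{\overline w(K)}{\overline w(L)}\frac{H_{d-2}(L)}{H_{d-1}(L)}\right)^2 d\theta$ after the mean-zero cancellation — this is exactly the $L^2$-difference term appearing in the statement. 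The third term $\|F\|^2$ should reduce to $\delta_2(K,L)^2$ by the analogue of \eqref{delta2}.

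Next I would substitute these expansions into \eqref{eg5}, collect terms, and rearrange to isolate $V(K,L)^2-W_{d-2}(K)W_{d-2}(L)$ on the left. The algebra should be organized so that the ``square'' term $\left(\frac{\overline w(K)}{\overline w(L)}W_{d-2}(L)-V(K,L)\right)^2$ emerges by completing the square among the quadratic-in-$h_K$ expressions — this is the analogue of how the classical Alexandrov–Fenchel deficit $V(K,L)^2-W_{d-2}(K)W_{d-2}(L)$ is nonnegative, and it accounts for the difference between ``$\|\text{projection}\|^2$'' and the full norm. The coefficients $\frac{d-1}{(3d+5)d}$ and $\frac{2(d+1)(d+2)}{d(3d+5)(d-1)}$ should fall out directly from the $(3d+5)$ and $2(d+1)(d+2)$ coefficients in \eqref{eg5} after dividing through by the appropriate power of $(d-1)$ and by $d(3d+5)$; the factor $W_{d-2}(L)$ multiplying the bracket comes from the mean-width normalization $\overline w(L)\sim W_{d-1}(L)$ interacting with $W_{d-2}(L)$.

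The main obstacle I anticipate is the bookkeeping in the completion-of-square step: tracking precisely which cross terms between $h_K$ and $h_L$ recombine into $V(K,L)^2-W_{d-2}(K)W_{d-2}(L)$ versus $\left(\frac{\overline w(K)}{\overline w(L)}W_{d-2}(L)-V(K,L)\right)^2$, and confirming that the residual mean-width-squared terms cancel exactly rather than leaving an error. A secondary subtlety is justifying that $F$ really has vanishing first harmonics: this uses both the common-Steiner-point hypothesis (killing the degree-one part of $h_K-h_L$ when mean widths agree) and the specific scaling $\overline w(K)/\overline w(L)$ (killing the degree-zero part), and one must check these are compatible — i.e., that scaling $h_L$ by $\overline w(K)/\overline w(L)$ does not reintroduce a degree-one term, which holds because the degree-one part of $h_L$ encodes the Steiner point and scaling a point through the shared Steiner point (taken to be the origin) leaves it at the origin. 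Everything else is the kind of routine but lengthy identity-chasing already exemplified in the proofs of Theorems \ref{thm2} and \ref{thm3}.
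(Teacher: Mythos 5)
Your proposal is correct and follows essentially the same route as the paper: the paper applies \eqref{eg5} to $F=h_K-h_L$ after first reducing to the case $\overline w(K)=\overline w(L)$ (recovering the general case by rescaling $K$ by $\overline w(L)/\overline w(K)$), which is, up to a scalar multiple, the same as your direct choice $F=h_K-\frac{\overline w(K)}{\overline w(L)}h_L$, and the remaining steps you outline — converting $\langle F,[\Delta+(d-1)]F\rangle$, $\|[\Delta+(d-1)]F\|^2$ and $\|F\|^2$ via \eqref{v kl}, \eqref{H d-3}, \eqref{conv formulas}, \eqref{delta2}, together with the exact completion-of-square identity $V(K,L)^2-W_{d-2}(K)W_{d-2}(L)=\left(W_{d-2}(L)-V(K,L)\right)^2-W_{d-2}(L)\left[W_{d-2}(K)+W_{d-2}(L)-2V(K,L)\right]$ — are precisely the paper's \eqref{p0}--\eqref{p1} argument. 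Your observation that $\|F\|^2$ equals $\delta_2(K,L)^2$ only up to the mean-width scaling is consistent with what the paper's own rescaling step actually produces, so there is no gap in your approach.
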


\begin{proof}
We can assume that $z(K)=z(L)=0$ (\cite[Sec. 1.7]{schneider2014convex}). We first prove the case where $\overline w(K)=\overline w(L)$.
Let $\displaystyle h_K \sim \sum_{n=0}^{\infty} \textsf{K}_n$, $h_L\sim \sum_{n=0}^{\infty} \textsf{L}_{n}$.
By the assumption,
$\textsf{K}_{1}=\textsf{L}_{1}=0$, $\textsf{K}_{0}=\textsf{L}_{0}$.
In particular, $F:=h_K-h_L$ has vanishing zeroth and first spherical harmonics. We have
\begin{equation} \label{p0}
\begin{split}
&V(K, L)^{2}-W_{d-2}(K) W_{d-2}(L)\\
=&\left(W_{d-2}(L)-V(K, L)\right)^{2}-W_{d-2}(L)\left[ W_{d-2}(K)+W_{d-2}(L)-2 V(K, L) \right].
\end{split}
\end{equation}

We consider the term $W_{d-2}(K)+W_{d-2}(L)-2 V(K, L)$.
By \eqref{H d-3} and \eqref{v kl},
\begin{equation}\label{p1}
\begin{split}
d(d-1)\left[W_{d-2}(K)+W_{d-2}(L)-2 V(K, L)\right]
=&\left\langle h_{K}-h_{L}, [\Delta+(d-1)]\left(h_{K}-h_{L}\right)\right\rangle\\
=&\left\langle F, \Delta F+(d-1) F\right\rangle.
\end{split}
\end{equation}

The inequality \eqref{eg5} gives
\begin{align*}
\left\langle F, \left[\Delta +(d-1) \right]F\right\rangle\ge-\frac{1}{3 d+5} \left\|[\Delta+(d-1)] F\right\|^{2}-\frac{2(d+1)(d+2)}{3 d+5}\|F\|^{2}
\end{align*}
By \eqref{delta2},
$ \|F\|^2=\delta_2(K, L)^2$ and by \eqref{conv formulas},
\begin{align*}
\left\| \Delta F +(d-1)F \right\|^{2}=(d-1)^2\int_{ \mathbb S^{d-1}}\left(\frac{H_{d-2}(K)}{H_{d-1}(K)}-\frac{H_{d-2}(L)}{H_{d-1}(L)}\right)^{2} d \theta.
\end{align*}
Put these into \eqref{p1} and then \eqref{p0},
we obtain
\begin{align*}
&V(K, L)^{2}-W_{d-2}(K) W_{d-2}(L)\\
\le&\left(W_{d-2}(L)-V(K, L)\right)^{2}\\
&+W_{d-2}(L)
\left[ \frac{ d-1 }{(3 d+5)d} \int_{\mathbb S^{d-1}}\left(\frac{H_{d-2}(K)}{H_{d-1}(K)}-\frac{H_{d-2}(L)}{H_{d-1}(L)}\right)^{2} d \theta+\frac{2(d+1)(d+2)}{d(3 d+5)(d-1)}\delta_{2}(K, L)^{2} \right].
\end{align*}
The general case can be deduced from above by replacing $K$ with $\frac{\overline w(L)}{\overline w(K)}K$.
\end{proof}

\end{document}